\newtheorem{theorem}{Theorem}
\newtheorem{conjecture}{Conjecture}
\newtheorem{question}{Question}
\newtheorem{lemma}[theorem]{Lemma}
\newcommand{\E}{\mathbb{E}}
\begin{document}
\title{Independence number in triangle-free graphs avoiding a minor}
\author{%
     Zden\v{e}k Dvo\v{r}\'ak\thanks{Computer Science Institute (CSI) of Charles University,
           Malostransk{\'e} n{\'a}m{\v e}st{\'\i} 25, 118 00 Prague, 
           Czech Republic. E-mail: \protect\href{mailto:rakdver@iuuk.mff.cuni.cz}{\protect\nolinkurl{rakdver@iuuk.mff.cuni.cz}}.
           Supported by project 17-04611S (Ramsey-like aspects of graph coloring) of Czech Science Foundation.}
\and Liana Yepremyan\thanks{Mathematical Institute, University of Oxford, Oxford, UK. 
E-mail: \protect\href{mailto:yepremyan@maths.ox.ac.uk}{\protect\nolinkurl{yepremyan@maths.ox.ac.uk}}. Supported by ERC Consolidator Grant 647678.}}

\date{\today}
\maketitle
\begin{abstract}
The celebrated Hadwiger's conjecture states that if a graph contains no $K_{t+1}$ minor then it is $t$-colourable. 
If true, it would in particular imply that every $n$-vertex $K_{t+1}$-minor-free graph has an independent set of size at least $n/t$.
In 1982, Duchet and Meyniel proved that this bound holds within a factor $2$. Their bound has been improved; most notably in an absolute factor by Fox, which was later improved by Balogh and Kostochka. Here we consider the same question for triangle-free graphs.  By the results of Shearer and Kostochka and Thomason, it follows that any triangle-free graph with no $K_t$ minor has an independent set
of size $\Omega(\tfrac{\sqrt{\log{t}}}{t}n)$. We show that  a much larger independent set exists; for all sufficiently large $t$ every  triangle-free graph on $n$ vertices with no $K_t$-minor has an independent set of size $ \tfrac{n}{t^{1-\varepsilon}}$. This answers a question of Sergey Norin.
\end{abstract}

\section{Introduction}

We say that a graph $H$ is a minor of another graph $G$ if $H$ can be obtained from a subgraph of $G$ by contracting edges. By an $H$-minor of $G$,
we mean a minor of $G$ which is isomorphic to $H$. The famous Hadwiger's conjecture~\cite{hadwiger} from 1943 states that if a graph does not contain $K_{t+1}$ then it is $t$-colorable.  In other words, this conjecture says that  if the chromatic number of a graph is at least~$t$ then
it must contain $K_t$ as a minor. This conjecture is one of the most celebrated ones in graph theory and if true, it would be a far-reaching generalization of the Four Colour Theorem~\cite{FCT}. It was solved by Hadwiger for $t\leq 3$. For $t=4$, Wagner~\cite{wagner} showed it is equivalent to the Four Colour Theorem (long before the latter was proven).
For $t=5$, Robertson, Seymour and Thomas~\cite{roberstonseymthomas}  proved it using the Four Colour Theorem. The conjecture is open for all $t\geq 6$.

Note that Hadwiger's conjecture is known to be true for almost all graphs. Bollob\'{a}s, Catlin, and Erd\H{o}s~\cite{BCE} showed that the random graph $G(n, 1/2)$, where each pair among $n$ vertices is present as an edge with probability $1/2$ does satisfy Hadwiger's conjecture; the largest clique minor of $G(n, 1/2)$ is of order $\Theta(\frac{n}{\sqrt{\log{n}}})$ while it is well known (see, for example~\cite{FriezeKaronski}) that the chromatic number of $G(n,1/2)$ is of size $(1\pm o(1))\frac{n}{2\log_2{n}}$. 

Hadwiger's conjecture is hard; one of the obstacles being that graphs with large chromatic number don't have specific structural characteristics. For this reason, people started looking at various weakenings  of the conjecture. For a graph $G$, $\alpha(G)$ is defined to be the size of the largest independent set. Every $n$-vertex graph $G$ has chromatic number at least $\lceil n/\alpha(G) \rceil$, and should contain a clique minor of this size if Hadwiger's conjecture is true. So for a while the following conjecture, even though not clearly stated as such, has received  quite a lot of attention (see, for example, Seymour's survey on Hadwiger's conjecture~\cite{seymour}.)

\begin{conjecture}\label{conj:stable}
For every positive integer $t$, if an $n$-vertex graph $G$ does not contain $K_{t+1}$ as a minor then $\alpha(G)\geq n/t$.
\end{conjecture}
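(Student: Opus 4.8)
The plan is to attack Conjecture~\ref{conj:stable} by sharpening the greedy clique-minor construction of Duchet and Meyniel, which already yields the bound up to a factor of~$2$. One first reduces to the case that $G$ is connected, by passing to the component $C$ maximising $|V(C)|/\alpha(C)$. For a connected $G$ with no $K_{t+1}$ minor, the Duchet--Meyniel process builds branch sets $B_1,\dots,B_k$ of a clique minor together with an independent set $\{v_1,\dots,v_k\}$ with $v_i\in B_i$: having chosen $B_1,\dots,B_i$, if $\bigcup_{j\le i}N[B_j]\ne V(G)$ one picks an uncovered vertex $v_{i+1}$ and grows a connected set $B_{i+1}\ni v_{i+1}$, disjoint from the earlier branch sets, by attaching to it a short path to each earlier $B_j$ it does not yet touch; the process stops once $\bigcup_iN[B_i]=V(G)$, and as the $B_i$ form a $K_k$ minor we get $k\le t$. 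Since each $B_i$ is a union of such connecting paths out of $v_i$, one controls $|N[B_i]|$ in terms of $|B_i|$, and a counting argument over the at most $t$ branch sets — combined with the fact that the $v_i$ form an independent set — produces $\alpha(G)\ge n/(2t-1)$.

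To reach the sharp bound $\alpha(G)\ge n/t$ one has to eliminate the factor-$2$ loss, which is precisely the overhead of the connecting paths: in the worst case each new independent vertex $v_{i+1}$ drags along a path contributing roughly one extra dominated vertex per edge. The route I would pursue is a global amortisation in place of the naive ``one independent vertex per branch set'' accounting: a vertex spent on a connecting path between $B_{i+1}$ and $B_j$ is still present in the graph and ought to be chargeable — in a later round, or via an LP-duality / fractional-relaxation argument on the quotient obtained by contracting the $B_i$ — to the independent mass it helps to uncover. In parallel one should exploit the structure of the class itself: by the Kostochka and Thomason bound a $K_{t+1}$-minor-free graph has average degree $O(t\sqrt{\log t})$, so whenever $t=o(n/\sqrt{\log n})$ the graph is sparse, and then the branch sets can be taken as low-degree BFS trees whose closed neighbourhoods are not much larger than the trees themselves. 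Splitting into the ``dense'' regime (few vertices relative to $t$, where the conjecture is essentially immediate) and this sparse regime, and pushing the efficient-branch-set construction in the latter, is where the real work lies.

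The main obstacle is that this factor-$2$ barrier is exactly what has resisted attack for nearly four decades: Fox, and subsequently Balogh and Kostochka, improved only the lower-order multiplicative constant, and no argument is known that beats $(2-\varepsilon)t$ for a fixed $\varepsilon>0$ in full generality. I do not expect the amortisation sketched above to close the gap on its own, because the near-extremal configurations — graphs assembled by clique-sums of small dense pieces, for which $\alpha(G)=n/t$ is tight and the clique minor is essentially forced — are not ruled out by the minor hypothesis alone, and it is unclear how to exploit their rigidity. Absent such a breakthrough, the realistic outcome of this line is a result under an extra hypothesis that excludes the hard configurations, for instance a bound on the clique number of $G$: when $G$ is triangle-free the local sparsity is strong enough that the efficient branch-set construction loses almost nothing, and one obtains $\alpha(G)\ge n/t^{1-\varepsilon}$ in place of the Duchet--Meyniel $n/(2t-1)$, while Conjecture~\ref{conj:stable} itself remains open.
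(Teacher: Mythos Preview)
The statement you are trying to prove is labelled \emph{Conjecture} in the paper, and the paper does not prove it; it is presented as an open problem, with Duchet--Meyniel and the Fox / Balogh--Kostochka refinements cited as the state of the art. So there is no ``paper's own proof'' to compare against.

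Your proposal is not a proof either, and you say so yourself: the amortisation you sketch for eliminating the factor-$2$ loss in the Duchet--Meyniel argument is a hope, not an argument, and you correctly identify the barrier (near-extremal graphs built by clique-sums of small dense pieces) and correctly conclude that Conjecture~\ref{conj:stable} remains open. As a research summary your last paragraph is accurate; as a proof attempt it has exactly the gap you name --- there is no mechanism offered that actually recovers the lost factor, and the known obstructions you cite (the constant-factor improvements of Fox and Balogh--Kostochka being the best available) are real.

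One remark on your closing sentence about the triangle-free case: the paper does prove $\alpha(G)\ge n/t^{1-\varepsilon}$ for triangle-free $K_t$-minor-free graphs, but not by refining the Duchet--Meyniel branch-set construction as you suggest. The paper's route is quite different: it shows that a triangle-free graph whose every small independent set has a large closed neighbourhood must contain a dense minor (via a random-contraction argument on an auxiliary graph $G^k_{\le 3}$ recording vertex pairs joined by many short internally disjoint paths), and then runs an induction that peels off independent sets with small closed neighbourhoods. No greedy clique-minor growth or BFS branch sets appear.
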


In 1982 Duchet and Meyniel~\cite{duchetmeyniel} proved that this bound holds within a factor of $2$:
every $n$-vertex graph $G$ which does not contain $K_{t+1}$ as a minor satisfies $\alpha(G)\geq\tfrac{n}{2t}+\frac{1}{2}$. Subsequently this bound has been improved
in various works~\cite{maffraymeyniel, woodall, PlummerStiToft, kawarabayashiplumtoft, kawarabayashisong, Pedersentoft,Baloghlenzwu,fox,baloghkostochka}.
Among these the first multiplicative constant factor improvement was done by Fox~\cite{fox}: every $n$-vertex graph with no $K_{t+1}$ minor satisfies
$\alpha(G)\geq n/(2-c)t$, for some absolute constant $c>0.017$.  Building upon the main idea of set potentials of~\cite{fox},
Balogh and Kostochka~\cite{baloghkostochka} improved the bound on the constant $c$ to $c>1/(19.2)>0.052$.

We are interested in studying Conjecture~\ref{conj:stable} for triangle-free graphs. This question has been initiated by Sergey Norin at Bellairs Research Institute during 2017
Barbados Graph Theory workshop, the motivation ultimately being to prove or disprove Hadwiger's conjecture for triangle-free graphs.  Let us remark that series of works is present in the literature of a similar  flavour. To name a few, K\"uhn and Osthus in~\cite{KuhnOsthusKss} proved that Hadwiger's conjecture is true  for  $K_{s,s}$-free graphs whose chromatic number is sufficiently large compared with $s$ and in~\cite{KuhnOsthusGirth}  for $C_4$-free graphs of sufficiently large chromatic number and for graphs of girth at least~$19$. More recently,  Krivelevich and Sudakov~\cite{KS} studied the existence of complete minors in graphs with good vertex expansion properties and finally, Nenadov and Krivelevich~\cite{NS} studied  the same question for graphs with good edge expansion properties.

Conjecture~\ref{conj:stable} for triangle-free graphs has Ramsey theoretic motivation. For $s,k$ integers, the \emph{Ramsey number} $R(s, k)$ is the smallest integer $n$ such that every graph on $n$ vertices contains a copy of $K_s$ or its complement contains a copy of $K_k$. The determination of Ramsey numbers is very hard and a central topic in Extremal Combinatorics. Ajtai, Koml\'os and Szemer\'edi~\cite{ajtaikomlosszemeredi} proved that any triangle-free graph on $n$ vertices with average degree $d$ has an independent set of size at least $c\tfrac{\log{d}}{d}n$,
for some small constant $c>0$.  The constant $c$ was later improved to $(1+o_d(1))$ by Shearer~\cite{shearer}.  The result of Ajtai, Koml\'os and Szemer\'edi~\cite{ajtaikomlosszemeredi}  and a result of Kim~\cite{kim} show that  $R(3,k) = \Theta(k^2/\log{k})$.  Determining the right constant in the bound for $R(3,k)$ is a major problem in Ramsey theory.  This result of Shearer implies that $ R(3,k) \leq (1+o(1))k^2/\log{k}$. 
Independent works of Bohman and Keevash~\cite{bohmankeevash} and Fiz Pontiveros, Griffiths, and Morris~\cite{FGM}
show that $R(3, k) \geq (1/4 + o(1))k^2/ \log{k}$. Recently,  Davies, Jenssen, Perkins and Roberts~\cite{Jenssenetal} proved a lower bound on the average size of an independent set in a triangle-free graph of maximum degree $d$, matching the asymptotic form of Shearer's result, and in turn giving
an alternative proof of the above upper bound on $R(3, k)$. Their results also implied lower bounds on the total number of independent sets, improving on the previously known best bounds by Cooper and Mubayi~\cite{coopermubayi}.

So what is known about the size of largest independent set in triangle-free graphs without $K_t$-minor? Kostochka~\cite{kostochka} and Thomason~\cite{thomason} independently proved that  the average degree of graphs with no $K_t$ minor is of order $O(t\sqrt{\log{t}})$; with random graphs showing tightness of this bound (later, the exact constant factor was determined by Thomason~\cite{thomasonexact}). So by combining this result together with the result of Ajtai, Koml\'os and Szemer\'edi, we get that in any triangle-free $n$-vertex graph with no $K_t$ minor, one can  always guarantee independent sets of size $\Omega(\tfrac{\sqrt{\log{t}}}{t}n)$.  Our main result shows that much larger independent sets can be guaranteed.

\begin{theorem}
\label{thm:main}
For every positive $0<\varepsilon<1/26$ there exists a positive integer $t_0$ such that for every $t\ge t_0$,
if $G$  is a triangle-free graph on $n$ vertices with no $K_t$-minor then $\alpha(G)\ge \tfrac{n}{t^{1-\varepsilon}}$.
\end{theorem}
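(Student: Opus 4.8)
The plan is to argue by strong induction on $n$: assume the statement for all triangle-free $K_t$-minor-free graphs on fewer vertices, take a minimal counterexample $G$, and derive a contradiction by exhibiting a $K_t$-minor in $G$.

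First I would collect the cheap reductions that a minimal counterexample survives. If $G$ has a vertex $v$ with $\deg v<t^{1-\varepsilon}$, then $G-N[v]$ is smaller, so induction gives $\alpha(G)\ge\alpha(G-N[v])+1\ge\tfrac{n-1-\deg v}{t^{1-\varepsilon}}+1\ge\tfrac{n}{t^{1-\varepsilon}}$, a contradiction; hence $G$ has minimum degree at least $t^{1-\varepsilon}$. The same splitting argument (using that $\alpha$ is additive over components) shows $G$ is connected. Since $G$ is triangle-free, $N(v)$ is independent for every $v$, so $\alpha(G)\ge t^{1-\varepsilon}$, and with $\alpha(G)<n/t^{1-\varepsilon}$ this forces $n>t^{2-2\varepsilon}$ (a bit more with the Ramsey bound $\alpha(G)=\Omega(\sqrt{n\log n})$). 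Finally, if $G$ — or any large induced subgraph of $G$ — had average degree at most roughly $t^{1-\varepsilon}\log t$, then Shearer's theorem would give $\alpha\ge(1+o(1))\tfrac{\log d}{d}n>\tfrac{n}{t^{1-\varepsilon}}$, again a contradiction. So I may assume $G$ is, in an averaged sense, everywhere of degree between about $t^{1-\varepsilon}\log t$ and $O(t\sqrt{\log t})$ (the upper bound from Kostochka--Thomason), connected, with $\gg t^{2-2\varepsilon}$ vertices.

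To produce the minor I would use a Krivelevich--Sudakov-style ball-growing construction. First extract a well-expanding subgraph: whenever the current graph has a fairly balanced separator of size at most a tiny power-of-$t$ fraction of the current order, recurse on the two sides; the separator vertices deleted along the whole recursion are few enough to be absorbed, because it suffices to prove the bound with a slightly larger exponent than $1-\varepsilon$ (there is polynomial slack). When no such separator exists, the piece expands well and one builds $t$ pairwise-adjacent connected branch sets from breadth-first balls linked along short paths. This gives a $K_r$-minor with $r$ of order $\sqrt{n'}$ up to polylogarithmic factors, $n'$ being the order of the expanding piece. Since $n'\gg t^{2-2\varepsilon}$ this already yields $r\gg t^{1-\varepsilon}$; the gap to $r\ge t$ is a polynomial factor $t^{\Theta(\varepsilon)}$, and closing it is where real work is needed. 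A natural way to do so is a self-improving (amplification) step: an intermediate clique minor uses only a polynomially-small fraction of the vertices, so one can build it, contract, and repeat, each round feeding the current bound back into the construction applied to many subpieces and improving the exponent of the clique-minor size as a function of $n/\alpha$; triangle-freeness enters both through Shearer's bound and in controlling the iteration, which is pushed until that exponent exceeds $1/(1-\varepsilon)$.

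The hard part — and the origin of the constant $1/26$ — is making all of these thresholds consistent simultaneously. One must bound the total loss to separators in the expansion-extraction step; one must ensure that the pieces one works with never land in the dangerous window of being too large for the trivial/Shearer estimate but too small for ball-growing to reach a $K_t$-minor (which is exactly what forces the amplification step, and is where dense triangle-free $K_t$-minor-free graphs of intermediate size must be analysed more carefully); and one must track every constant and $\log t$ factor lost in Kostochka--Thomason, in Shearer's theorem, in the ball-growing, and in each amplification round. I expect that no single step is conceptually deep once the framework is in place, but that reconciling all of the bookkeeping — which is precisely what pins the admissible range of $\varepsilon$ down to $\varepsilon<1/26$ — is the bulk of the argument.
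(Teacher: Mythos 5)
Your reductions (minimum degree, connectivity, removing closed neighborhoods of cheap independent sets, the Shearer and Kostochka--Thomason bounds) match the outer shell of the paper's argument, but the core of your proof is exactly the part you leave open. A Krivelevich--Sudakov ball-growing argument in an expanding piece of order $n'\gg t^{2-2\varepsilon}$ only yields a clique minor of order about $\sqrt{n'}$ up to polylogarithmic factors, i.e.\ about $t^{1-\varepsilon}$, and you acknowledge that closing the remaining $t^{\Theta(\varepsilon)}$ gap ``is where real work is needed.'' The amplification step you propose to close it does not work as stated: after you build an intermediate clique minor and contract, the resulting graph is no longer triangle-free, so neither Shearer's bound nor any independence-number leverage can be fed back into the next round, and no mechanism is given by which the exponent of the clique-minor size as a function of $n/\alpha$ would actually improve. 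Since every other step in your outline is standard, this missing step is the theorem, not bookkeeping; the constant $1/26$ in the paper does not arise from reconciling thresholds in an iteration of this kind.

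The paper closes the gap by a single, quite different structural step that uses triangle-freeness directly. If $H$ is triangle-free with $\alpha(H)$ close to $\sqrt{|H|}$, then for any two vertices $x,y$ that are not joined by many internally disjoint paths of length at most $3$, the bipartite graph on $N(x)\cup N(y)$ (neighborhoods are independent by triangle-freeness) has a small vertex cover by K\H{o}nig's theorem; summing over an independent set $Y$ of the auxiliary graph $H^k_{\le 3}$ shows $N_H(Y)$ contains a large independent set, contradicting the bound on $\alpha(H)$ unless $Y$ is small. Hence $H^k_{\le 3}$ is dense, and a random-contraction argument (pick a random vertex subset, contract stars onto it, so that the many short disjoint paths become edges) converts this into a minor of average degree at least $d\approx\frac13 t\sqrt{\log t}$ --- note this exceeds $\sqrt{|H|}$, which is precisely what ball-growing cannot deliver --- and then Thomason's theorem produces the $K_t$-minor in one shot, with no iteration and no need to preserve triangle-freeness after contraction. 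To repair your proof you would need either a worked-out substitute for this path-richness argument or a genuinely different way to beat the $\sqrt{n'}$ barrier for clique minors in triangle-free graphs with small independence number.
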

We made no attempt to further improve the bound on $\varepsilon$ in this theorem but note that it cannot be larger than $1/3$, due to the following well-known result of Erd\H{o}s. \begin{theorem}[Erd\H{o}s~\cite{Erdos}]\label{thm:erdos}
There exist $A,n_0>0$ such that for all $n>n_0$ there exists $n$-vertex graph $G$ which is triangle-free, does not contain a set of $[A\sqrt{n}\log{n}]$ independent vertices and the number of edges is at most $n^{3/2}/\sqrt{A}$.
\end{theorem}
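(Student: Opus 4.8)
The plan is to prove Theorem~\ref{thm:erdos} by the probabilistic \emph{deletion method} (alteration). Fix a small absolute constant $c>0$ and let $G=G(n,p)$ with $p=cn^{-1/2}$. Three first‑moment computations drive the argument. First, $\E[e(G)]=\binom n2 p=\Theta(n^{3/2})$ and $e(G)$ is concentrated, so with probability close to $1$ we have $e(G)\le n^{3/2}/\sqrt A$ for a suitable constant $A$ depending on $c$. Second, the expected number of triangles of $G$ is $\binom n3 p^3=\Theta(n^{3/2})$, so by Markov's inequality the number $Y$ of triangles is $O(n^{3/2})$ with probability close to $1$. Third, the expected number of independent sets of size $k$ is $\binom nk(1-p)^{\binom k2}\le \exp\!\big(k\ln n-p\binom k2\big)$, which tends to $0$ as soon as $k$ exceeds a suitable constant multiple of $p^{-1}\ln n=\Theta(\sqrt n\ln n)$. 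For an appropriate pair of constants $(c,A)$ all three events hold simultaneously with positive probability, so for $n>n_0$ we may fix a graph $G$ on $n$ vertices with $e(G)\le n^{3/2}/\sqrt A$, with $O(n^{3/2})$ triangles, and with independence number $O(\sqrt n\log n)$.

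Next I would make the graph triangle‑free: repeatedly delete one edge from each triangle, obtaining a spanning triangle‑free subgraph $G'$; then $e(G')\le e(G)\le n^{3/2}/\sqrt A$, which is one of the two required properties. The difficulty is that deleting edges can only increase the independence number, and we have removed an essentially uncontrolled set of up to $O(n^{3/2})$ edges. The way around this, which I expect to be the heart of the proof, is to strengthen the third moment estimate: instead of merely asking that $G$ has no independent set of size $k$, one asks that $G$ has no ``dangerous'' set $S$ of $k$ vertices, meaning one in which \emph{every} edge of $G[S]$ lies in a triangle of $G$. An edge of $G[S]$ that lies in no triangle of $G$ is never eligible for deletion and hence survives in $G'$, so any non‑dangerous $k$‑set spans an edge of $G'$; consequently, if $G$ has no dangerous $k$‑set then $\alpha(G')<k=O(\sqrt n\log n)$, and we are done (after enlarging $A$ to absorb the increase).

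The remaining and hardest step is therefore to bound, for a fixed $k$‑set $S$ with $k=\Theta(\sqrt n\log n)$, the probability that every edge of $G[S]$ extends to a triangle of $G$, and to show that this probability is small enough — of order $\binom nk^{-1}=\exp(-\Theta(\sqrt n(\log n)^2))$ — to survive the union bound over all $\binom nk$ sets $S$. I expect this to require a careful multi‑round exposure of the randomness (first the edges inside $S$, then the edges from $S$ to the outside) together with a correlation‑control argument, since the events ``edge $e$ of $G[S]$ extends to a triangle'' are not independent across $e$; handling these correlations is exactly what pins down the correct order $\Theta(\sqrt n\log n)$ of $\alpha(G')$, and is where the threshold barrier at exponent $1/3$ ultimately comes from. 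Finally one must reconcile the constants: $c$ should be small enough that $e(G)\le n^{3/2}/\sqrt A$ holds with a moderate $A$, while the union bound pushes $A$ upward in terms of $c$; checking that some admissible pair $(c,A)$ exists, and that every probabilistic estimate holds once $n>n_0$, completes the argument and yields the statement in the form claimed.
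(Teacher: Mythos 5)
The paper does not prove this statement at all --- it is quoted as a black-box result of Erd\H{o}s (the 1961 lower bound for $R(3,k)$), so there is no internal proof to compare against; the only question is whether your proposal is itself a complete proof, and it is not. Your reduction is sound as far as it goes: deleting one edge from every triangle of $G$ leaves every edge of $G$ that lies in no triangle intact, so if no $k$-set $S$ is ``dangerous'' (every edge of $G[S]$ in a triangle of $G$, which in particular covers independent $S$), then $\alpha(G')<k$, and the bookkeeping of the constants $c$ and $A$ for the edge count is routine. But the entire mathematical content of Erd\H{o}s's theorem is precisely the step you defer: showing that for a fixed $S$ with $|S|=k=\Theta(\sqrt n\log n)$ the probability that every edge of $G[S]$ extends to a triangle is at most roughly $\binom nk^{-1}=e^{-\Theta(\sqrt n(\log n)^2)}$. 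You state that you ``expect this to require a careful multi-round exposure of the randomness together with a correlation-control argument,'' which is an accurate description of what is needed, but no such argument is given, and it is not a routine verification: one must split according to the number of edges inside $S$ (few edges forces near-independence of $S$, which is handled by the $(1-p)^{\binom k2}$ computation; many edges requires bounding the probability that all of them are covered by triangles, where the covering events share outside vertices and are positively correlated), and the quantitative balance between $p\binom k2$, $k\log n$, and the per-edge covering probability $1-(1-p^2)^{n-k}$ is exactly where the proof succeeds or fails. Without this estimate the proposal is a plan in the spirit of Erd\H{o}s's original argument, not a proof.

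One smaller point to be careful about if you do carry this out: naive first-moment deletion (delete an edge from each triangle and only control the number of triangles) provably cannot give independence number $O(\sqrt n\log n)$ --- it stalls at $n^{2/3}$ up to logarithms --- so the ``dangerous set'' strengthening is not an optional refinement but the crux, and your union bound must be run for the dangerous-set event itself, not pieced together from the separate bounds on $\alpha(G)$ and on the triangle count that you computed in the first paragraph.
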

In this theorem the graph $G$ is obtained as a  subgraph of $G(n,n^{3/2}/\sqrt{A})$, where recall that $G(n,m)$ denotes the random graph on $n$ vertices with $m$ edges.
Since $G$ has at most $n^{3/2}/\sqrt{A}< {n^{3/4} \choose 2}$ edges, it trivially cannot contain $K_{n^{3/4}}$ as a minor. Now by putting $t=n^{3/4}$ we can see that  Theorem~\ref{thm:main} is tight for $\varepsilon=1/3$ up to logarithmic factor, because $G$ described above has no $K_t$-minor, is triangle-free and the largest independent set  is of size less than $ [A\sqrt{n}\log{n}] = A\frac{n}{t^{1-1/3}}\log{n}.$

A line of research connecting Ramsey numbers to the minor-free classes of graphs was already pursued earlier; Walker~\cite{walker} in 1969, and, independently Steinberg and Tovey~\cite{Steinberg1993} in 1993 introduced the notion of planar Ramsey numbers. For $s,k$ integers, the \emph{planar  Ramsey number} $\textit{PR}(s, k )$ is the smallest integer $n$ for which every planar graph $G$ on $n$ vertices contains a copy of $K_s$ or its complement contains a copy of $K_k$. This is the usual Ramsey number except  the ground set is restricted to planar graphs.  Steinberg and Tovey~\cite{Steinberg1993}  determined all  planar Ramsey numbers and showed that they grow linearly  in contrary to usual Ramsey numbers which grow exponentially. This  exhibits that once restricted to the plane the Ramsey problem  becomes tractable. They determine all planar Ramsey numbers using the Four Colour Theorem, Gr\"unbaum's Theorem and by establishing a  positive solution to  a  conjecture of  Albertson, Bollob\'as, and Tucker~\cite{albertson1976};  every triangle-free planar graph on $n$ vertices contains an independent set of size $\lfloor n/3 \rfloor + 1$. The first two theorems are hard results in graph theory describing  the structural properties of a specific class of graphs.

Since the family of planar graphs consists precisely of those graphs which do not contain $K_5$ or $K_{3,3}$ as a minor, our question, that is, Conjecture~\ref{conj:stable} for triangle-free graphs, can be viewed as a natural generalization of  planar Ramsey numbers. For $t,s,k$ integers let us define the $t$-\emph{minor  Ramsey number} $\textit{MR}_t(s, k )$ to be the smallest integer $n$ for which every $K_t$-minor-free graph $G$ on $n$ vertices contains a copy of $K_s$ or its complement contains a copy of $K_k$.  In this language, Theorem~\ref{thm:main} says that for all  $0<\varepsilon<1/26$ there exist $t_0$ such that for all $t\geq t_0$,  $\textit{MR}_t(3,k)\leq  t^{1-\varepsilon}k$. Theorem~\ref{thm:erdos} says that  this bound cannot be improved further than $\varepsilon=1/3$ up to logarithmic factor. In Section~\ref{sec-open}, we conjecture that  for all $s\geq 4$,  a similar bound must hold: $\textit{MR}_t(s,k)\leq  t^{1-\varepsilon}k$ for sufficiently large $t$, here $\varepsilon$ cannot be larger than $1/s$. See Section~\ref{sec-open} for details.

In the next section, we prove various  lower bounds of the average degrees of  minors in dense graphs. These are slightly  technical results which we use  
 to prove Theorem~\ref{thm:main} in Section~\ref{sec-main}. We finish by discussing related open problems in Section~\ref{sec-open}.

\section{Average degrees of minors in dense graphs}

We need the following consequence of Chernoff's inequality.

\begin{lemma}\label{lemma-chernoff2}
Let $X_1$, \ldots, $X_m$ be independent random variables, each having value~$1$ with probability $p$
and value $0$ otherwise.  Then $\text{Pr}[\sum_{i=1}^m X_i>2pm]\le e^{-pm/3}$.
\end{lemma}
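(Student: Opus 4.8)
The plan is to apply the exponential-moment (Chernoff) method. Write $S=\sum_{i=1}^m X_i$ and $\mu=\E[S]=pm$; if $p=0$ the statement is trivial, so assume $\mu>0$. First I would fix a parameter $\lambda>0$, to be chosen later, and apply Markov's inequality to the nonnegative random variable $e^{\lambda S}$:
\[
\text{Pr}[S>2\mu]=\text{Pr}[e^{\lambda S}>e^{2\lambda\mu}]\le e^{-2\lambda\mu}\,\E[e^{\lambda S}].
\]
By independence, $\E[e^{\lambda S}]=\prod_{i=1}^m\E[e^{\lambda X_i}]$, and for each $i$ we have $\E[e^{\lambda X_i}]=1-p+pe^{\lambda}=1+p(e^{\lambda}-1)\le e^{p(e^{\lambda}-1)}$, using the elementary inequality $1+x\le e^{x}$. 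Multiplying these bounds over $i$ gives $\E[e^{\lambda S}]\le e^{\mu(e^{\lambda}-1)}$, and hence
\[
\text{Pr}[S>2\mu]\le e^{\mu(e^{\lambda}-1-2\lambda)}.
\]

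Next I would optimize the exponent over $\lambda>0$. The function $\lambda\mapsto e^{\lambda}-1-2\lambda$ has derivative $e^{\lambda}-2$, so it is minimized at $\lambda=\ln 2$, where its value is $2-1-2\ln 2=1-2\ln 2$. Plugging this in yields
\[
\text{Pr}[S>2\mu]\le e^{-(2\ln 2-1)\mu}.
\]
Since $2\ln 2-1=0.386\ldots>1/3$, we conclude $\text{Pr}[S>2\mu]\le e^{-\mu/3}=e^{-pm/3}$, as claimed.

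There is essentially no genuine obstacle here: this is the textbook multiplicative Chernoff bound, and the only point requiring any care is the numerical verification $2\ln 2-1>1/3$, which is precisely what allows the constant $3$ in the exponent (a slightly sharper constant would also be available). One could instead quote a standard reference for the Chernoff inequality, but the self-contained three-line computation above is at least as short.
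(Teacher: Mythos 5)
Your proof is correct and complete: the exponential-moment computation, the bound $1+x\le e^{x}$, the optimization at $\lambda=\ln 2$, and the numerical check $2\ln 2-1>1/3$ all go through. The paper itself offers no proof of this lemma (it is stated as a standard consequence of Chernoff's inequality), and your argument is exactly the textbook derivation that is being implicitly invoked.
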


In this section, we derive a technical result (Lemma~\ref{lemma-incrdeg}) showing that minors with large average degree appear in graphs where many vertices
are joined by many internally disjoint paths of length at most three; this result and its proof are motivated by similar statements
used in~\cite{KuhnOsthusKss}.
For a graph $G$ and a positive integer $k$, let $G_{\leq3}^k$ denote the graph with the same vertex
set in which vertices $u$ and $v$ are adjacent if either $uv\in E(G)$ or $G$ contains $k$ internally vertex
disjoint paths of length at most $3$ from $u$ to $v$.  We will apply the following lemma twice in the proof of Theorem~\ref{thm:main},
once with $k$ almost linear in the average degree of $G$ (in Lemma~\ref{lemma-smallalpha}), once with $k=1$ (in Lemma~\ref{lemma-sd}).  Although most of the argument between these two cases
is shared, there turns out to be a qualitative difference reflected in the two possibilities for the choice of $b$ in the statement of the lemma.

\begin{lemma}\label{lemma-incrdeg}
Let $k$ be a positive integer, let $0<\varepsilon<1$ and $d$ be real numbers such that
$d\ge \max\{288^{1/(1-\varepsilon)}, 16\sqrt{k}\}$.  Let
$$b=\begin{cases}
2700d^{2+\varepsilon}/k&\text{ if $k\le 324d^{2\varepsilon}$}\\
150d^2/\sqrt{k}&\text{ if $k>324d^{2\varepsilon}$.}
\end{cases}$$
Let $G$ be a graph of minimum degree at least $d^{1-\varepsilon}/2$ and maximum degree at most $2d$.
If $G_{\leq3}^k$ has average degree at least $b$, then $G$ has a minor of average degree at least $d$.
\end{lemma}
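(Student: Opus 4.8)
The plan is to build the desired dense minor of $G$ by taking a suitable collection of branch sets, where each branch set is either a single vertex or a short subtree realising one of the length-$\le 3$ paths witnessed in $G_{\le3}^k$. Concretely, I would start from the graph $H := G_{\le3}^k$, which has average degree at least $b$, and extract from it a subgraph $H'$ of minimum degree at least $b/2$ (a standard greedy deletion argument). The vertices of $H'$ will be the ``hubs'' of the branch sets, and for each edge $uv$ of $H'$ that is not already an edge of $G$ I need to route one of the $k$ internally vertex-disjoint $u$--$v$ paths of length at most $3$ through $G$; the internal vertices of these chosen paths, together with the hubs, form the branch sets of the minor. The key point is that after contracting each branch set to a point, the resulting minor has one vertex per hub and an edge whenever the corresponding $H'$-edge was successfully realised by a path whose internal vertices do not collide with other branch sets or with hubs used elsewhere.

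The heart of the argument is therefore a counting/probabilistic selection step ensuring that enough of the $H'$-edges can be realised simultaneously without the internal vertices of the chosen paths overlapping. Here is where the hypotheses on degrees enter: $G$ has maximum degree at most $2d$, so any fixed internal vertex lies on at most $O(d^2)$ paths of length $\le 3$ between a fixed pair, and more importantly each vertex of $G$ can be an internal vertex of boundedly many of the paths we select; since we have $k$ choices of disjoint path for each edge, a random choice (pick one of the $k$ paths for each edge independently and uniformly) makes the expected number of ``conflicts'' small, and Lemma~\ref{lemma-chernoff2} lets us delete a small fraction of hubs/edges to clean up the remaining conflicts. The minimum-degree hypothesis $\delta(G)\ge d^{1-\varepsilon}/2$ is used to guarantee that length-$3$ paths are plentiful enough that the branch sets can be kept small — the number of vertices ``used up'' by internal vertices is controlled, so that the contracted graph retains average degree at least $d$ rather than being diluted. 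The two cases in the definition of $b$ correspond to the two regimes of how $k$ compares to $d^{2\varepsilon}$: when $k$ is small relative to $d^{2\varepsilon}$ the bottleneck is the number of available disjoint paths (hence the factor $d^{2+\varepsilon}/k$), while when $k$ is large the paths are so abundant that the bottleneck becomes the $\sqrt{k}$-type loss coming from the Kostochka--Thomason-style accounting (hence $d^2/\sqrt{k}$); in both cases the constants ($2700$, $150$) are chosen generously so that, after the deletion steps, the surviving minor has at least $\approx b k / (\text{internal vertices per edge}) \ge d$ average degree.

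The main obstacle I anticipate is the bookkeeping in the simultaneous-routing step: one must argue that the random selection of paths, followed by deletion of conflicted hubs, leaves a subgraph of $H'$ that is still dense, and simultaneously that the total number of internal vertices consumed is small enough not to destroy the average-degree bound of the minor. This requires balancing three quantities — the number of surviving hubs, the number of surviving realised edges, and the total size of all branch sets — and it is precisely the interplay of these three that forces the somewhat delicate split into the two cases for $b$ and the particular exponents appearing there. I would handle it by first fixing the random choice, computing the expected number of pairs of paths sharing an internal vertex (using $\Delta(G)\le 2d$ to bound the number of paths through any vertex), applying Lemma~\ref{lemma-chernoff2} to concentrate this count, then deleting one hub from each surviving conflicting pair; a short calculation then shows the remaining minor has average degree at least $d$ provided $b$ is as large as stated. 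Everything else — the greedy extraction of $H'$, the contraction producing the minor, the verification that branch sets are connected — is routine.
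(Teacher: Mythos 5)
There is a genuine gap, and it is structural rather than a matter of bookkeeping: your minor has one branch set per vertex of $H'$, i.e.\ you never reduce the number of branch sets below $|H'|$, and nothing in your plan controls $|H'|$. This cannot work in the regime the lemma is actually used in. The hypotheses allow $G$ to be (say) $d^{1-\varepsilon}$-regular, so $\Vert G\Vert=\tfrac12 d^{1-\varepsilon}|G|$, while $G_{\le3}^k$ can still be essentially regular of degree about $b$, in which case the minimum-degree-$b/2$ subgraph $H'$ contains almost all of $V(G)$. Since any minor of $G$ has at most $\Vert G\Vert$ edges, a minor with average degree at least $d$ can have at most $2\Vert G\Vert/d\approx d^{-\varepsilon}|G|$ branch sets; with $|H'|=\Theta(|G|)$ hubs your contracted graph can never reach average degree $d$, no matter how cleverly the paths are routed or how conflicts are resolved. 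Relatedly, your claim that ``each vertex of $G$ can be an internal vertex of boundedly many of the paths we select'' has no justification: for $k=1$ (the case needed for Lemma~\ref{lemma-largen3}) there is no freedom in the choice of path, and a single vertex of degree $\Theta(d)$ can be the forced internal vertex for $\Omega(d^2)$ distinct pairs, so the expected number of conflicts swamps the number of hubs and the ``delete one hub per conflict'' cleanup destroys everything.

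The missing idea, which is exactly what the paper's proof supplies, is a sparse random selection of hubs followed by contracting stars onto them: one samples $X\subseteq V(G)$ with density $p=\max(18/d^{1-\varepsilon},\sqrt{k}/d)$ (this is also where the two cases for $b$ really come from --- they correspond to which term attains the maximum, not to a Kostochka--Thomason-type loss), discards ``blocked'' vertices that see too many hubs, and lets every remaining non-hub vertex choose a random neighbouring hub to be contracted into. A path $uxv$ or $uxyv$ between hubs then yields a minor edge when its internal vertices happen to choose $u$ and $v$, which occurs with probability at least $(4pd+2)^{-2}$ per path, hence roughly $k/(pd)^2$ over the $k/2$ clean paths; combined with the fact that an edge of $G_{\le3}^k$ has both ends sampled and stays viable with probability about $p^2/2$, the minor has about $p|G|$ branch sets and average degree about $\tfrac{kb}{150pd^2}\ge d$. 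It is precisely this shrinking of the vertex set by the factor $p$ (with the discarded vertices recycled as connectors inside the stars) that produces the density boost; without it, the counting obstruction above shows your route cannot be repaired by any choice of constants.
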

\begin{proof}
Let $p=\max(18/d^{1-\varepsilon},\sqrt{k}/d)$; note that $p\le 1/16$ by the assumptions on $d$.  Let $X$ be a set of vertices of $G$ obtained by
picking each vertex independently at random with probability $p$.
We say that a vertex $z\in V(G)$ is \emph{blocked} if either $z\in X$ or $z$ has more than $2p(\deg(z)-2)+2$ neighbors in $X$.
Let $Z\subseteq V(G)\setminus\{z\}$ be a set of size two; let us bound the probability that $z$ is blocked under the condition that $Z\subseteq X$.
By Lemma~\ref{lemma-chernoff2}, this probability is at most $q\le p+e^{-p(\deg(z)-2)/3}\le p+e^{-pd^{1-\varepsilon}/6+2p/3}\le p+e^{-3+1/24}\le 1/8$.

We say that an edge $uv\in E(G_{\leq3}^k)$ is \emph{viable} if $u,v\in X$ and either $uv\in E(G)$, or $G$ contains
at least $k/2$ internally vertex disjoint paths of length at most $3$ from $u$ to $v$ whose internal vertices
are not blocked.  Let us give a lower bound on the probability that $uv$ is viable.  The probability that $\{u,v\}\subseteq X$ is $p^2$.
If $uv\not\in E(G)$, then by the definition of $G_{\leq3}^k$, there are at least $k$ internally vertex-disjoint paths of length at most three from $u$ to $v$ in $G$.
Under the condition that $\{u,v\}\subseteq X$, the probability that such a path contains an internal blocked vertex is at most $1/4$.
By Markov inequality, the probability (under the condition that $\{u,v\}\subseteq X$) that among $k$ internally vertex disjoint paths of length at most $3$,
more than $k/2$ contain an internal blocked vertex, is less than $\tfrac{k/4}{k/2}=\tfrac{1}{2}$.
Hence, the probability that $uv$ is viable is greater than $\tfrac{p^2}{2}$.
Let $H$ be an auxiliary graph with vertex set $X$ such that $uv$ is an edge of $H$ if and only if $uv\in E(G_{\leq3}^k)$ and $uv$ is viable.
We have $\E[\Vert H\Vert]>\tfrac{p^2}{2}\Vert G_{\leq3}^k\Vert\ge \tfrac{p^2b|G|}{4}$ and $\E[|H|]=p|G|$, and thus
$\E[\Vert H\Vert-pb|H|/4]>0$.

Consequently, there exists a choice of $X\subseteq V(G)$ such that $H$ has average degree greater than $pb/2$.
Let us fix such a choice of $X$.  Independently for each non-blocked vertex $z\in V(G)$ which has a neighbor in $X$,
choose one such neighbor $c_z\in X$ uniformly at random.  Since $z$ is not blocked, if $x\in X$ is a neighbor of $z$,
then the probability that $c_z=x$ is at least $\tfrac{1}{2p(\deg(z)-2)+2}\ge \tfrac{1}{4pd+2}$.

Let $G'$ be the minor of $G$ obtained by contracting all
edges $vc_v$ for non-blocked vertices $v$ adjacent to $X$ and by removing all other vertices not belonging to $X$.
Let us consider an edge $uv\in E(H)$, and let us estimate a probability that $uv\in E(G')$.
If $uv\in E(G)$, then we always have $uv\in E(G')$.  Suppose that $uv\in E(H)\setminus E(G)$, and
thus $G$ contains at least $k/2$ internally vertex disjoint paths of length at most $3$ from $u$ to $v$
whose internal vertices are not blocked.  Consider such a path $uxv$ or $uxyv$.  If $c_x=u$, and $c_y=v$ if $y$ exists,
then $G'$ contains an edge between $u$ and $v$.  The probability this happens is at least $\tfrac{1}{(4pd+2)^2}$.
Hence, the probability $p'$ that this happens for one of the at least $k/2$ paths between $u$ and $v$
is at least $1-\bigl(1-\tfrac{1}{(4pd+2)^2}\bigr)^{k/2}\ge 1-e^{-\frac{k}{2(4pd+2)^2}}$.
Since $p\ge \sqrt{k}/d$, we have $\tfrac{k}{2(4pd+2)^2}\le \tfrac{k}{32p^2d^2}\le \tfrac{1}{32}$, and since $e^{-x}\le 1-\tfrac{32}{33}x$ when $0\le x\le 1/32$,
we have $p'\ge \tfrac{16k}{33(4pd+2)^2}$.  Note that $pd\ge \sqrt{k}\ge 1$, and thus $4pd+2\le 6pd$ and $p'>\tfrac{k}{75p^2d^2}$. 

Since $H$ has average degree greater than $pb/2$, the minor $G'$ of $G$ has with positive probability
average degree greater than $p'pb/2=\tfrac{kb}{150pd^2}$.
If $k\le 324d^{2\varepsilon}$, then $p=18/d^{1-\varepsilon}$ and
the average degree is greater than $\tfrac{kb}{150pd^2}=\tfrac{kb}{2700d^{1+\varepsilon}}=d$.
If $k>324d^{2\varepsilon}$, then $p=\sqrt{k}/d$ and the average degree is greater than $\tfrac{kb}{150pd^2}=\tfrac{\sqrt{k}b}{150d}=d$.
\end{proof}

For a vertex $v$ of a graph $G$ and a positive integer $\ell$, let $N_G^\ell[v]$ denote the set of vertices of $G$ at distance at most
$\ell$ from $v$.  Let us note the special case of Lemma~\ref{lemma-incrdeg} when $k=1$.

\begin{lemma}\label{lemma-largen3}
Let $0<\varepsilon<1$ and $d\ge 288^{1/(1-\varepsilon)}$ be real numbers.
Let $G$ be a graph of minimum degree at least $d^{1-\varepsilon}/2$ and maximum degree at most $2d$.
If $|N_G^3[v]|\ge 2800d^{2+\varepsilon}$ for every vertex $v\in V(G)$, then $G$ has a minor of average degree at least $d$.
\end{lemma}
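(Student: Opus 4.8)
The plan is to obtain this statement as an immediate specialization of Lemma~\ref{lemma-incrdeg} to the case $k=1$. First I would check that the hypotheses of that lemma are met. Since $0<\varepsilon<1$ we have $1/(1-\varepsilon)>1$, so $288^{1/(1-\varepsilon)}>288>16=16\sqrt{1}$, and hence the requirement $d\ge\max\{288^{1/(1-\varepsilon)},16\sqrt{k}\}$ with $k=1$ is just $d\ge 288^{1/(1-\varepsilon)}$, which is assumed; the bounds $\delta(G)\ge d^{1-\varepsilon}/2$ and $\Delta(G)\le 2d$ are also assumed verbatim. Moreover, because $d\ge 288>1$ and $\varepsilon>0$ we have $324d^{2\varepsilon}>324>1=k$, so we are in the first branch of the definition of $b$, giving $b=2700d^{2+\varepsilon}/k=2700d^{2+\varepsilon}$.

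Next I would translate the hypothesis on $N_G^3[v]$ into a statement about $G_{\le3}^1$. By the definition of $G_{\le3}^k$ with $k=1$, two distinct vertices $u$ and $v$ are adjacent in $G_{\le3}^1$ exactly when $uv\in E(G)$ or $G$ contains at least one path of length at most $3$ from $u$ to $v$, i.e.\ exactly when the distance between $u$ and $v$ in $G$ lies between $1$ and $3$. Consequently the neighborhood of $v$ in $G_{\le3}^1$ is precisely $N_G^3[v]\setminus\{v\}$, so $\deg_{G_{\le3}^1}(v)=|N_G^3[v]|-1\ge 2800d^{2+\varepsilon}-1$. Since $d\ge 288$ we certainly have $2800d^{2+\varepsilon}-1\ge 2700d^{2+\varepsilon}=b$, so every vertex of $G_{\le3}^1$ has degree at least $b$; in particular $G_{\le3}^1$ has average degree at least $b$. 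Applying Lemma~\ref{lemma-incrdeg} with $k=1$ then yields that $G$ has a minor of average degree at least $d$, which is the conclusion.

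I do not expect a genuine obstacle here: the whole argument is a bookkeeping check that the constant $2800$ appearing in the hypothesis comfortably absorbs both the constant $2700$ produced by Lemma~\ref{lemma-incrdeg} and the harmless $-1$ coming from passing between the closed neighborhood $N_G^3[v]$ and the degree in $G_{\le3}^1$, together with confirming that the $k=1$ instance indeed falls in the first case of the piecewise definition of $b$. The only thing to be careful about is to phrase the adjacency rule in $G_{\le3}^1$ correctly as ``distance at most $3$'', since for $k=1$ the ``$k$ internally disjoint paths'' condition degenerates to the existence of a single short path.
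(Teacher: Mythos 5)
Your proposal is correct and follows exactly the paper's own argument: specialize Lemma~\ref{lemma-incrdeg} to $k=1$ (which falls in the first case, $b=2700d^{2+\varepsilon}$), and note that the hypothesis $|N_G^3[v]|\ge 2800d^{2+\varepsilon}$ gives $G_{\le3}^1$ minimum (hence average) degree at least $2800d^{2+\varepsilon}-1\ge b$. Your additional bookkeeping (checking $d\ge 16\sqrt{k}$ and identifying the neighborhood in $G_{\le3}^1$ with $N_G^3[v]\setminus\{v\}$) is just a more explicit rendering of the same proof.
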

\begin{proof}
Let $k=1$ and note that $k<324d^{2\varepsilon}$.
By the assumptions, $G_{3,1}$ has minimum degree at least $2800d^{2+\varepsilon}-1\ge 2700d^{2+\varepsilon}/k$.
The claim thus follows by Lemma~\ref{lemma-incrdeg}.
\end{proof}

We also need the following reformulation of Tur\'an's theorem.
\begin{lemma}\label{lemma-turan}
A graph $G$ of average degree $d$ has an independent set of size at least $\tfrac{|G|}{d+1}$.
Equivalently, every graph has average degree at least $\tfrac{|G|}{\alpha(G)}-1$.
\end{lemma}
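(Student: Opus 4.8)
The plan is to establish the stronger Caro--Wei-type bound $\alpha(G)\ge \sum_{v\in V(G)}\tfrac{1}{\deg(v)+1}$ and then apply convexity. The displayed statement follows immediately, since $\tfrac{|G|}{d+1}$ is exactly what convexity produces when $d$ is the average degree, and the ``equivalently'' clause is a one-line rearrangement of $\alpha(G)\ge \tfrac{|G|}{d+1}$ (divide by $\alpha(G)$ and subtract~$1$).

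For the Caro--Wei bound I would argue probabilistically. Let $\pi$ be a uniformly random linear order of $V(G)$, and let $I$ be the set of vertices $v$ such that $v$ precedes every neighbour of $v$ in $\pi$. If $uv\in E(G)$, then $u$ and $v$ cannot both lie in $I$ (one of the two comes first in $\pi$), so $I$ is an independent set. For a fixed vertex $v$, the event $v\in I$ is precisely the event that $v$ is the $\pi$-minimum element of $\{v\}\cup N(v)$; since $\pi$ restricted to any fixed subset is a uniformly random order of that subset, this event has probability $\tfrac{1}{\deg(v)+1}$. Hence $\E[|I|]=\sum_{v\in V(G)}\tfrac{1}{\deg(v)+1}$, so some choice of $\pi$ yields an independent set of at least this size. (Alternatively, the same inequality follows by induction on $|G|$: delete the closed neighbourhood $N[v]$ of a vertex $v$ of minimum degree, apply the inductive bound to $G-N[v]$, and note that the $\deg(v)+1$ deleted vertices each contribute at most $\tfrac{1}{\deg(v)+1}$ to the sum, so their total contribution is at most~$1$, which is paid for by adding $v$ itself to the independent set.)

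To finish, note that $x\mapsto \tfrac{1}{x+1}$ is convex on $[0,\infty)$, so Jensen's inequality gives $\tfrac{1}{|G|}\sum_{v}\tfrac{1}{\deg(v)+1}\ge \bigl(\tfrac{1}{|G|}\sum_v \deg(v)+1\bigr)^{-1}=\tfrac{1}{d+1}$, and therefore $\alpha(G)\ge \sum_v \tfrac{1}{\deg(v)+1}\ge \tfrac{|G|}{d+1}$. There is no genuine obstacle here — this is just Tur\'an's theorem in a convenient form — and the only points that require a little care are verifying that $I$ is actually independent and that the per-vertex probability equals $\tfrac{1}{\deg(v)+1}$.
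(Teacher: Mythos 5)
Your proof is correct. Note that the paper does not actually prove this lemma at all: it is stated without proof as ``the following reformulation of Tur\'an's theorem,'' i.e.\ the authors simply invoke the classical bound $\alpha(G)\ge |G|/(d+1)$ and its rearrangement $d\ge |G|/\alpha(G)-1$. What you supply is the standard Caro--Wei argument (random permutation, $\Pr[v\in I]=\tfrac{1}{\deg(v)+1}$ by symmetry of the order restricted to $\{v\}\cup N(v)$, linearity of expectation), followed by Jensen's inequality for the convex map $x\mapsto \tfrac{1}{x+1}$; this is a legitimate and in fact slightly stronger route, since it yields the degree-sequence bound $\alpha(G)\ge\sum_v \tfrac{1}{\deg(v)+1}$ rather than only the average-degree form the paper needs. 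Both the permutation argument and your alternative induction on $|G|$ (deleting $N[v]$ for $v$ of minimum degree, using that degrees only decrease in $G-N[v]$ and that the $\deg(v)+1$ deleted terms sum to at most $1$) are sound, and the ``equivalently'' clause is indeed just the rearrangement $\alpha(G)(d+1)\ge|G|$. No gaps.
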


In a triangle-free graph $H$, it can be argued that if $Y$ is an independent set in $H_{\le3}^k$,
then the subgraph of $H$ induced by $N_H(Y)$ is sparse, and thus it contains a relatively large independent set.  If the independence number
of $H$ is bounded, this gives a bound on the independence number of $H_{\le3}^k$, which by Lemma~\ref{lemma-turan}
translates into a bound on the average degree of $H_{\le3}^k$.  This gives the following lower bound on the density of the graph $H_{\le3}^k$
for a triangle-free graph $H$ whose independence number is close to the minimum possible (about $\sqrt{|H|}$, as given by~\cite{ajtaikomlosszemeredi}).

\begin{lemma}\label{lemma-g3kdense}
Let $\beta,\gamma,d>0$ be real numbers such that $3\beta+2\gamma<1$ and $d^{2-2\beta-2\gamma}\ge 4$.
Let $k=\lfloor d^{1-3\beta-2\gamma}/16\rfloor$.
Let $H$ be a triangle-free graph of minimum degree at least $d^{1-\beta-\gamma}-1$.
If $|H|\ge d^{2-\gamma}$ and $\alpha(H)\le d^{1+\beta}$, then $H^k_{\le3}$ has average greater than $d^{2-2\beta-2\gamma}/4$.
\end{lemma}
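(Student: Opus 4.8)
The plan is to exploit the hypothesis $\alpha(H)\le d^{1+\beta}$ together with triangle-freeness to show that every independent set in $H^k_{\le3}$ is small, and then invoke Turán's theorem (Lemma~\ref{lemma-turan}) to convert a bound on $\alpha(H^k_{\le3})$ into the desired lower bound on the average degree of $H^k_{\le3}$. So the core task is: if $Y\subseteq V(H)$ is independent in $H^k_{\le3}$, bound $|Y|$ from above.

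First I would set up the key sparsity observation. Fix $Y$ independent in $H^k_{\le3}$. Since $H$ is triangle-free, $N_H(Y)$ induces a triangle-free graph, but more importantly I claim the subgraph $H[N_H(Y)]$ has small maximum degree — indeed, bounded by roughly $k$. The reason: if a vertex $w\in N_H(Y)$ had $k$ (or more) neighbors in $N_H(Y)$, one could try to find two vertices $u,v\in Y$ joined by $k$ internally disjoint paths of length at most $3$ through such structure, contradicting independence of $Y$ in $H^k_{\le3}$. More carefully, because $H$ is triangle-free, two distinct vertices $u,v\in Y$ have no common neighbor (else $u,v$ would be adjacent in $H^k_{\le3}$ via $k\ge1$ — wait, only one path; rather, more than $k$ common neighbors would force the edge). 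The cleanest route is a double-counting / pigeonhole argument: each edge inside $N_H(Y)$, say $w_1w_2$ with $w_1\in N_H(u)$ and $w_2\in N_H(v)$, yields a path $u w_1 w_2 v$ of length $3$; if too many such edges share the endpoints $u,v$ we get $k$ internally disjoint such paths (triangle-freeness guarantees the internal vertices are distinct from $u,v$ and the paths can be chosen disjoint after discarding a bounded number of collisions). Hence the number of edges of $H$ inside $N_H(Y)$, between the neighborhoods of any fixed pair $u,v\in Y$, is less than $k$ — and also each vertex of $N_H(Y)$ lies in $N_H(u)$ for at most one $u\in Y$ by triangle-freeness (no two vertices of the independent set $Y$ share a neighbor, since that would be a common neighbor forcing... again one needs $k$ of them; for $k=1$ this is immediate, for larger $k$ one argues the neighborhoods $N_H(u)$ for $u\in Y$ are \emph{nearly} disjoint, with overlap controlled by $k$). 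This lets me bound $\Vert H[N_H(Y)]\Vert$ by about $k\binom{|Y|}{2}$, so $H[N_H(Y)]$ has average degree $O(k|Y|^2/|N_H(Y)|)$.

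Next, since every vertex of $Y$ has degree at least $d^{1-\beta-\gamma}-1$ and the neighborhoods are nearly disjoint, $|N_H(Y)|\gtrsim |Y|\,d^{1-\beta-\gamma}$ (after subtracting an $O(k|Y|^2)$ overlap term, which is lower order when $|Y|$ is not too large — this needs checking but should go through given the specific choice $k=\lfloor d^{1-3\beta-2\gamma}/16\rfloor$). Therefore $H[N_H(Y)]$ has average degree $O\!\bigl(k|Y|/d^{1-\beta-\gamma}\bigr)$, and by Turán (Lemma~\ref{lemma-turan}) it contains an independent set of size $\gtrsim |N_H(Y)|\big/\bigl(k|Y|/d^{1-\beta-\gamma}\bigr)\gtrsim d^{2-2\beta-2\gamma}/k$. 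But this independent set lies in $H$, so $d^{2-2\beta-2\gamma}/k \lesssim \alpha(H)\le d^{1+\beta}$; plugging in $k\approx d^{1-3\beta-2\gamma}/16$ makes the left side $\approx d^{1+\beta}\cdot(\text{const})$, which with correct constants forces $|Y|$ to be bounded by something like $16\,d^{1+\beta}\cdot(\text{const})$. Tracking constants, I expect $\alpha(H^k_{\le3})\le c\, d^{1+\beta}$ for a suitable constant, hence by Lemma~\ref{lemma-turan} again $H^k_{\le3}$ has average degree at least $|H|/\alpha(H^k_{\le3})-1\ge d^{2-\gamma}/(c\,d^{1+\beta})-1 \ge d^{2-2\beta-2\gamma}/4$ for $d$ large (using $d^{2-2\beta-2\gamma}\ge4$ and $3\beta+2\gamma<1$ to absorb the $-1$ and the constant).

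The main obstacle is the first step: making precise the claim that triangle-freeness plus independence of $Y$ in $H^k_{\le3}$ forces $H[N_H(Y)]$ to be sparse with the right quantitative dependence on $k$. One must simultaneously control (i) how much the neighborhoods $N_H(u)$, $u\in Y$, can overlap, and (ii) how many edges can run between $N_H(u)$ and $N_H(v)$ for a fixed pair $u,v$ — and in both cases the bound must be $O(k)$, matched against the path-counting definition of $H^k_{\le3}$ (length-$2$ paths through a common neighbor, and length-$3$ paths through an edge of $N_H(Y)$), while arguing that near-collisions among these paths cost only a constant factor so that genuinely $k$ internally disjoint paths survive. Triangle-freeness is what guarantees the internal vertices of these short paths avoid $Y$ and avoid creating shortcuts, so it must be used carefully at each step. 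Once this combinatorial heart is in place, the rest is the Turán-plus-arithmetic bookkeeping sketched above, and the hypotheses $3\beta+2\gamma<1$ and $d^{2-2\beta-2\gamma}\ge4$ are exactly what is needed to keep all error terms lower order and to reach the clean final bound $d^{2-2\beta-2\gamma}/4$.
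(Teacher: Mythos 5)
Your overall plan (bound $\alpha(H^k_{\le3})$, then apply Lemma~\ref{lemma-turan} to $H^k_{\le3}$) matches the paper, but the combinatorial heart of your argument has a genuine gap. You claim that for a fixed pair $u,v\in Y$ the number of edges of $H$ running between $N_H(u)$ and $N_H(v)$ is at most about $k$, because each such edge gives a path $uw_1w_2v$ of length $3$. This is false: the absence of $k$ internally vertex-disjoint $u$--$v$ paths of length $3$ only bounds the \emph{matching number} of the bipartite graph between the (pruned) neighborhoods, not its number of edges. A single vertex $w_1\in N_H(u)$ adjacent to many vertices of $N_H(v)$ creates many edges, but all the corresponding length-$3$ paths share the internal vertex $w_1$, so they are not internally disjoint. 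Consequently your bound $\Vert H[N_H(Y)]\Vert\lesssim k\binom{|Y|}{2}$ does not follow, and the Tur\'an-on-$H[N_H(Y)]$ step collapses (with the correct edge bound, vertex cover of size $<k$ times an unbounded degree, the resulting independent set is far too small to contradict $\alpha(H)\le d^{1+\beta}$). The paper's fix is exactly the missing idea: after removing the set $Z_1$ of vertices with two or more neighbours in $Y$ (using $|N_H(x)\cap N_H(y)|\le k-1$, which handles the overlap you left vague), each bipartite graph $H_{xy}$ has no matching of size $k$, so by K\H{o}nig it has a vertex cover $Z_{xy}$ of size at most $k-1$; deleting $Z_2=\bigcup Z_{xy}$ (at most $(k-1)|Y|^2$ vertices) leaves $N_H(Y)\setminus(Z_1\cup Z_2)$ \emph{itself} independent in $H$, giving directly $d^{1+\beta}>|Y|(d^{1-\beta-\gamma}-2k|Y|)$, which rules out $|Y|=\lceil 2d^{2\beta+\gamma}\rceil$ and hence gives $\alpha(H^k_{\le3})<2d^{2\beta+\gamma}$.

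There is also a quantitative problem in your endgame. After your cancellation the inequality $d^{2-2\beta-2\gamma}/k\lesssim d^{1+\beta}$ no longer contains $|Y|$, so it cannot bound $|Y|$; and the bound you then posit, $\alpha(H^k_{\le3})\le c\,d^{1+\beta}$, is too weak for the conclusion: Lemma~\ref{lemma-turan} would only give average degree about $d^{2-\gamma}/(c\,d^{1+\beta})=d^{1-\beta-\gamma}/c$, which is much smaller than the target $d^{2-2\beta-2\gamma}/4$ (the exponents differ by $1-\beta-\gamma>0$), so your final displayed inequality fails for large $d$. What is actually needed, and what the paper's counting delivers, is the much stronger bound $\alpha(H^k_{\le3})<2d^{2\beta+\gamma}$, from which $\tfrac{d^{2-\gamma}}{2d^{2\beta+\gamma}}-1\ge d^{2-2\beta-2\gamma}/4$ follows using $d^{2-2\beta-2\gamma}\ge 4$.
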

\begin{proof}
Consider any non-empty independent set $Y$ in $H_{\le3}^k$.
Note that $Y$ is also an independent set in $H$.
Let $Z_1$ be the set of vertices of $H$ that have more than one neighbor in $Y$.
Since distinct vertices $x,y\in Y$ are non-adjacent in $H_{\le3}^k$, we have $|N_H(x)\cap N_H(y)|\le k-1$,
and
$$|N_H(x)\cap Z_1|\le \sum_{y\in Y\setminus \{x\}} |N_H(x)\cap N_H(y)|<(k-1)|Y|$$
for every $x\in Y$.  
Since $H$ has minimum degree at least $d^{1-\beta-\gamma}-1$, it follows that the number of vertices adjacent to $x$
and not to any other vertex of $Y$ satisfies
$$|N_H(x)\setminus Z_1|=|N_H(x)|-|N_H(x)\cap Z_1|>d^{1-\beta-\gamma}-1-(k-1)|Y|.$$
For distinct $x,y\in Y$, consider the subgraph $H_{xy}$
of $H$ induced by $(N_H(x)\cup N_H(y))\setminus Z_1$.  By the definition of $Z_1$, the sets $N_H(x)\setminus Z_1$
and $N_H(y)\setminus Z_1$ are disjoint, and since $H$ is triangle-free, they are independent; hence, the graph $H_{xy}$ is bipartite.
Since $H$ does not contain $k$ internally vertex-disjoint paths of length $3$ from $x$ to $y$, the subgraph $H_{xy}$
does not contain a matching of size $k$, and thus it has a vertex cover $Z_{xy}$ of size at most $k-1$.
Let $Z_2$ be the union of the sets $Z_{xy}$ over all distinct $x,y\in Y$; we have $|Z_2|<(k-1)|Y|^2$.
By the choice of $Z_1$ and $Z_2$, the set $N_H(Y)\setminus (Z_1\cup Z_2)$ is independent in $H$,
and since $\alpha(H)\le d^{1+\beta}$, we have
\begin{align*}
d^{1+\beta}&\ge |N_H(Y)\setminus (Z_1\cup Z_2)|=\Bigl(\sum_{x\in Y}|N_H(x)\setminus Z_1|\Bigr)-|Z_2|\\
&>|Y|(d^{1-\beta-\gamma}-1-(k-1)|Y|)-(k-1)|Y|^2> |Y|(d^{1-\beta-\gamma}-2k|Y|).
\end{align*}
By this inequality, we have $|Y|\neq \lceil 2d^{2\beta+\gamma}\rceil$, and thus $H_{\le3}^k$ does not
contain any independent set of size exactly $\lceil 2d^{2\beta+\gamma}\rceil$.  We conclude that $\alpha(H_{\le3}^k)<2d^{2\beta+\gamma}$.
By Lemma~\ref{lemma-turan}, it follows that $H_{\le3}^k$ has average degree
at least $\tfrac{|H_{\le 3}^k|}{\alpha(H_{\le 3}^k)}-1>\tfrac{d^{2-\gamma}}{2d^{2\beta+\gamma}}-1=d^{2-2\beta-2\gamma}/2-1\ge d^{2-2\beta-2\gamma}/4$.
\end{proof}

\section{Proof of Theorem~\ref{thm:main}}\label{sec-main}

Due to the following result of Thomason~\cite{thomasonexact}, as long as we do not care about polylogarithmic factors,
instead of considering clique minors, it suffices to consider minors with large average degree.
\begin{theorem}[Thomason~\cite{thomasonexact}]\label{thm-minor}
There exists a positive integer $t_0$ such that for every $t\ge t_0$, every graph of average
degree at least $\tfrac{1}{3}t\sqrt{\log t}$ contains $K_t$ as a minor.
\end{theorem}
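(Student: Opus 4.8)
The plan is essentially to invoke this as a known result. It is the classical extremal function for complete minors: the order of magnitude $\Theta(t\sqrt{\log t})$ is due independently to Kostochka~\cite{kostochka} and Thomason~\cite{thomason}, and Thomason~\cite{thomasonexact} later showed that a graph with no $K_t$-minor has average degree at most $(\alpha+o(1))t\sqrt{\log t}$ for an explicit constant $\alpha<\tfrac{1}{3}$. Taking the contrapositive of this asymptotic statement yields the claimed implication for all sufficiently large $t$, and that is how I would conclude. For completeness I record the shape of the argument behind a bound of this type, since only its crude form is used here.

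First I would pass from $G$ to a subgraph $G'$ of minimum degree at least half the average degree of $G$, so that $\delta(G')\ge d$ with $d=\tfrac{1}{6}t\sqrt{\log t}$; it then suffices to find a $K_t$-minor in $G'$. Next comes a minimality reduction: among all minors $H$ of $G'$ with $\Vert H\Vert\ge d\,|H|$, choose one with $|H|$ as small as possible. A short counting argument --- if some vertex has degree below $d$ delete it, and if the endpoints of some edge have fewer than $d$ common neighbours contract that edge, contradicting minimality in either case --- shows that $H$ has minimum degree at least $d$ and, moreover, that every edge of $H$ lies in at least $d$ triangles. The problem is thereby reduced to exhibiting a $K_t$-minor in such a ``locally dense'' graph $H$.

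The endgame is a dichotomy according to whether $H$ is pseudorandom. If every two disjoint vertex sets of size at least $|H|/t$ are joined by an edge, then a uniformly random partition of $V(H)$ into $t$ classes yields, with positive probability, $t$ pairwise-adjacent connected branch sets, that is, a $K_t$-minor; here $\delta(H)\ge d$ and the local density of $H$ are used to guarantee that each class is connected and each of the $\binom{t}{2}$ pairs is adjacent. If $H$ fails to be pseudorandom one instead locates a sparse separation and passes to a smaller, suitably denser minor, and iterates. The main obstacle --- and the reason the threshold is $\Theta(t\sqrt{\log t})$ rather than $\Theta(t)$ --- is the probabilistic estimate in the pseudorandom case: it is precisely the simultaneous requirements of keeping all $t$ classes connected and joining all $\binom{t}{2}$ pairs that force $d$ to be of order $t\sqrt{\log t}$ and no smaller, the tightness being witnessed by $G(n,\tfrac{1}{2})$ with $n=\Theta(t\sqrt{\log t})$, which has average degree $\Theta(t\sqrt{\log t})$ and no $K_t$-minor. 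Carrying out this estimate together with the delicate numerology of the non-pseudorandom reduction, and with the optimal constant, is the content of~\cite{thomasonexact}; here we simply cite it.
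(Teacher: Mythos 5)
This statement is a quoted literature result: the paper gives no proof and simply cites Thomason's asymptotically exact extremal function for complete minors, which is exactly what you propose, so your approach coincides with the paper's. (Your expository sketch is extra and not relied upon; note also that the exact constant depends on whether one normalizes by edge density or by average degree, but since only the crude bound $O(t\sqrt{\log t})$ is ever used, this does not affect the application.)
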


As the first step, we combine Lemmas~\ref{lemma-incrdeg} and \ref{lemma-g3kdense} to show that if the independence number
of a triangle-free graph $G$ is close to the minimum possible (roughly $\sqrt{|G|}$), then $G$ contains a dense minor.

\begin{lemma}\label{lemma-smallalpha}
Let $\beta,\gamma,d>0$ be real numbers such that $7\beta+6\gamma<1$ and $d^{1-7\beta-6\gamma}\ge 10^7$.
Let $G$ be a triangle-free graph of maximum degree at most $2d$.
If $|G|\ge 2d^{2-\gamma}$ and $\alpha(G)\le d^{1+\beta}$, then $G$ has a minor of average degree at least $d$.
\end{lemma}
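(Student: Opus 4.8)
The plan is to clean $G$ into a subgraph $H$ that is dense enough and has large enough minimum degree for Lemma~\ref{lemma-g3kdense} to apply, and then to feed the dense graph $H^k_{\le 3}$ produced by that lemma into Lemma~\ref{lemma-incrdeg}.

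First I would set $\delta_0=d^{1-\beta-\gamma}-1$ and repeatedly delete from $G$ any vertex of current degree less than $\delta_0$, stopping when none remains; call the resulting induced subgraph $H$ and put $R=V(G)\setminus V(H)$. Ordering $R$ by deletion time, every vertex of $R$ has fewer than $\delta_0$ neighbours that come later, so colouring $R$ greedily in reverse deletion order uses at most $\lceil\delta_0\rceil$ colours; hence $\alpha(G[R])\ge|R|/\lceil\delta_0\rceil>|R|/d^{1-\beta-\gamma}$. Since $\alpha(G[R])\le\alpha(G)\le d^{1+\beta}$, this gives $|R|<d^{1-\beta-\gamma}\cdot d^{1+\beta}=d^{2-\gamma}$, and therefore $|H|=|G|-|R|>2d^{2-\gamma}-d^{2-\gamma}=d^{2-\gamma}$ (in particular $H$ is nonempty). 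Now $H$ is triangle-free, has minimum degree at least $\delta_0=d^{1-\beta-\gamma}-1$, maximum degree at most $2d$, and $\alpha(H)\le d^{1+\beta}$; moreover $3\beta+2\gamma<1$ and $d^{2-2\beta-2\gamma}\ge d^{1-7\beta-6\gamma}\ge 10^7\ge 4$. Thus Lemma~\ref{lemma-g3kdense} applies with $k:=\lfloor d^{1-3\beta-2\gamma}/16\rfloor$ and shows that $H^k_{\le 3}$ has average degree greater than $d^{2-2\beta-2\gamma}/4$.

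Next I would apply Lemma~\ref{lemma-incrdeg} to $H$, with this $k$, with its ``$\varepsilon$'' taken to be $\beta+\gamma$, and with the same $d$. Its hypotheses hold: from $d^{1-7\beta-6\gamma}\ge 10^7$ one gets $d^{1-\beta-\gamma}\ge 288$ (so $d\ge 288^{1/(1-\beta-\gamma)}$) and $16\sqrt k\le 4d^{(1-3\beta-2\gamma)/2}\le d$; the minimum degree of $H$ is at least $\delta_0\ge d^{1-\beta-\gamma}/2=d^{1-(\beta+\gamma)}/2$; and the maximum degree of $H$ is at most $2d$. Since $d^{1-5\beta-4\gamma}\ge d^{1-7\beta-6\gamma}\ge 10^7>5184$, we have $k>324d^{2(\beta+\gamma)}$, so the relevant value in the lemma is $b=150d^2/\sqrt k$. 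It remains to check $d^{2-2\beta-2\gamma}/4\ge 150d^2/\sqrt k$: using $k\ge d^{1-3\beta-2\gamma}/17$ one bounds $150d^2/\sqrt k\le 630\,d^{(3+3\beta+2\gamma)/2}$, and $d^{2-2\beta-2\gamma}/4\ge 630\,d^{(3+3\beta+2\gamma)/2}$ is equivalent to $d^{(1-7\beta-6\gamma)/2}\ge 2520$, which holds since $d^{1-7\beta-6\gamma}\ge 10^7$. Hence $H^k_{\le 3}$ has average degree at least $b$, so Lemma~\ref{lemma-incrdeg} yields a minor of $H$, and therefore a minor of $G$, of average degree at least $d$.

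I expect the one delicate point to be the bound $|R|<d^{2-\gamma}$ in the cleaning step. A naive application of Tur\'an's theorem (Lemma~\ref{lemma-turan}) to $G[R]$ only estimates $\alpha(G[R])\ge|R|/(2\delta_0)$ and so gives $|R|<2d^{2-\gamma}$, which is worthless because $|G|$ is assumed only to be at least $2d^{2-\gamma}$; the point is to use instead that the deletion process equips $G[R]$ with a vertex ordering in which every vertex has fewer than $\delta_0$ later neighbours, so that $\alpha(G[R])\ge|R|/\lceil\delta_0\rceil$, which saves the factor of two and leaves $|H|>d^{2-\gamma}$ with room to spare. Everything else is the routine verification of the numerical hypotheses of Lemmas~\ref{lemma-g3kdense} and~\ref{lemma-incrdeg}, all of which have ample slack under $d^{1-7\beta-6\gamma}\ge 10^7$.
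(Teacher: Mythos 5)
Your proposal is correct and follows essentially the same route as the paper: trim $G$ to an induced subgraph $H$ of minimum degree about $d^{1-\beta-\gamma}$ with $|H|\ge d^{2-\gamma}$, apply Lemma~\ref{lemma-g3kdense}, and then feed the resulting density of $H^k_{\le 3}$ into Lemma~\ref{lemma-incrdeg} with $\varepsilon=\beta+\gamma$ in the $k>324d^{2\varepsilon}$ regime. The only (valid) difference is the bookkeeping in the trimming step: the paper deletes each low-degree vertex \emph{together with its neighbourhood}, so the deleted centres form an independent set of size at most $\alpha(G)\le d^{1+\beta}$ and at most $d^{1+\beta}d_0\le d^{2-\gamma}$ vertices are lost, whereas you delete only low-degree vertices and bound the deleted set via its degeneracy and a greedy colouring — both yield the same estimate $|H|\ge d^{2-\gamma}$.
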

\begin{proof}
Let $d_0=\lfloor d^{1-\beta-\gamma} \rfloor\ge d^{1-\beta-\gamma}-1$.
Repeatedly remove from $G$ vertices of degree less than $d_0$ together with their neighborhoods, until we obtain
an induced subgraph $H$ of $G$ of minimum degree at least $d_0$.
Since $\alpha(G)\le d^{1+\beta}$, this process stops after at most $d^{1+\beta}$ vertices and their neighborhoods were removed, and thus
$|H|\ge |G|-d^{1+\beta}d_0\ge |G|-d^{2-\gamma}\ge d^{2-\gamma}$.

Let $k=\lfloor d^{1-3\beta-2\gamma}/16\rfloor$.  By Lemma~\ref{lemma-g3kdense},
the graph $H^k_{\le3}$ has average greater than $d^{2-2\beta-2\gamma}/4$.  Let $\varepsilon=\beta+\gamma$ and note that
$k>324d^{2\varepsilon}$.  Let $b=150d^2/\sqrt{k}$; since $k=\lfloor d^{1-3\beta-2\gamma}/16\rfloor\ge d^{1-3\beta-2\gamma}/25$,
we have $b\le 750d^{(3+3\beta+2\gamma)/2}$.  Since $d^{1-7\beta-6\gamma}\ge 10^7>3000^2$, this implies $b<d^{2-2\beta-2\gamma}/4$, and thus $H^k_{\le3}$
has average degree greater than $b$.
By Lemma~\ref{lemma-incrdeg} with $\varepsilon=\beta+\gamma$, $H$ (and thus also $G$) contains a minor of average degree at least $d$.
\end{proof}

When proving Theorem~\ref{thm:main} for a graph $G$, we can assume that every independent set $A$ in $G$ has relatively large
neighborhood; otherwise, the claim follows by applying induction to the graph $G-N_G[A]$ and combining the resulting independent
set with $A$.  With this in mind, let us conversely argue that in a triangle-free graph, if independent sets have large neighborhoods,
then the graph contains a dense minor.

\begin{lemma}\label{lemma-sd}
Let $0<\varepsilon<1/26$ and $d$ be real numbers such that $d^{1-26\varepsilon}\ge 10^7$.
Let $G$ be a triangle-free graph of maximum degree at most $2d$.  Suppose that for every vertex $v\in V(G)$, every non-empty independent
set $A\subseteq N_G^2[v]$ satisfies $|N_G[A]|>2800d^{1-\varepsilon}|A|$.
Then $G$ has a minor of average degree at least $d$.
\end{lemma}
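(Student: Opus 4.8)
The plan is to split on whether some vertex of $G$ has a small third neighbourhood, and to feed the two outcomes into Lemma~\ref{lemma-smallalpha} and Lemma~\ref{lemma-largen3} respectively, with the parameter choice $\beta=\gamma=2\varepsilon$ (which is exactly what makes the side conditions of Lemma~\ref{lemma-smallalpha} collapse to the hypotheses we are given).

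First I would record two easy consequences of the hypothesis. Applying it to the singleton $A=\{v\}$ gives $\deg_G(v)+1=|N_G[v]|>2800d^{1-\varepsilon}$, so $G$ has minimum degree greater than $2800d^{1-\varepsilon}-1$, hence (since $d^{1-\varepsilon}\ge d^{1-26\varepsilon}\ge 10^7$) at least $d^{1-\varepsilon}/2$, and in particular every $N_G(v)$ is non-empty. Since $G$ is triangle-free, $N_G(v)$ is an independent subset of $N_G^2[v]$, and $N_G[N_G(v)]=N_G^2[v]$; applying the hypothesis to $A=N_G(v)$ therefore gives, for every $v\in V(G)$,
\[
|N_G^2[v]|>2800d^{1-\varepsilon}\deg_G(v)>2800d^{1-\varepsilon}\bigl(2800d^{1-\varepsilon}-1\bigr)>2d^{2-2\varepsilon},
\]
again using $d^{1-\varepsilon}\ge 10^7$.

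Now suppose first that some vertex $v$ satisfies $|N_G^3[v]|<2800d^{2+\varepsilon}$, and put $H=G[N_G^2[v]]$. Then $H$ is triangle-free with maximum degree at most $2d$ and $|H|=|N_G^2[v]|\ge 2d^{2-2\varepsilon}$. Every independent set $A$ of $H$ is independent in $G$ and contained in $N_G^2[v]$, so $N_G[A]\subseteq N_G^3[v]$ and the hypothesis gives $2800d^{1-\varepsilon}|A|<|N_G[A]|\le|N_G^3[v]|<2800d^{2+\varepsilon}$, whence $|A|<d^{1+2\varepsilon}$; thus $\alpha(H)\le d^{1+2\varepsilon}$. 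With $\beta=\gamma=2\varepsilon$ the requirements of Lemma~\ref{lemma-smallalpha} are precisely $7\beta+6\gamma=26\varepsilon<1$ and $d^{1-7\beta-6\gamma}=d^{1-26\varepsilon}\ge 10^7$, and we have just checked $|H|\ge 2d^{2-\gamma}$ and $\alpha(H)\le d^{1+\beta}$; so Lemma~\ref{lemma-smallalpha} produces a minor of $H$, hence of $G$, of average degree at least $d$. In the remaining case $|N_G^3[v]|\ge 2800d^{2+\varepsilon}$ for every $v$; since $d^{1-\varepsilon}\ge d^{1-26\varepsilon}\ge 10^7>288$ we have $d\ge 288^{1/(1-\varepsilon)}$, and $G$ has minimum degree at least $d^{1-\varepsilon}/2$ and maximum degree at most $2d$, so Lemma~\ref{lemma-largen3} applies directly and again yields a minor of $G$ of average degree at least $d$.

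The argument is essentially bookkeeping, and the one thing to get right is that the single threshold $2800d^{2+\varepsilon}$ on $|N_G^3[v]|$ does double duty: above it Lemma~\ref{lemma-largen3} fires, and below it the hypothesis forces $\alpha(G[N_G^2[v]])\le d^{1+2\varepsilon}$, which with $\beta=\gamma=2\varepsilon$ is exactly the independence bound Lemma~\ref{lemma-smallalpha} needs while leaving its numerical side conditions identical to the hypotheses on $\varepsilon$ and $d$. The only estimate that is not completely immediate is the lower bound $|N_G^2[v]|\ge 2d^{2-2\varepsilon}$, which has to invoke the neighbourhood hypothesis twice (once for $\deg_G(v)$, once for $N_G(v)$); everything else is a one-line inequality.
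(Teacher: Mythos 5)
Your proof is correct and follows essentially the same route as the paper: split on whether every vertex has $|N_G^3[v]|\ge 2800d^{2+\varepsilon}$ (then apply Lemma~\ref{lemma-largen3}) or some $v$ has a small third neighbourhood (then take $H=G[N_G^2[v]]$, deduce $\alpha(H)\le d^{1+2\varepsilon}$ and $|H|>2d^{2-2\varepsilon}$, and apply Lemma~\ref{lemma-smallalpha} with $\beta=\gamma=2\varepsilon$). The bookkeeping, including the minimum degree bound from $A=\{v\}$ and the size bound from $A=N_G(v)$, matches the paper's argument.
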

\begin{proof}
Since $\{v\}\subseteq N_G^2[v]$ is an independent set for each $v\in V(G)$, we have $|N_G[v]|>2800d^{1-\varepsilon}$, and thus $G$ has minimum degree
greater than $d^{1-\varepsilon}/2$.  If $|N_G^3[v]|\ge 2800d^{2+\varepsilon}$ for every $v\in V(G)$,
then $G$ has a minor of average degree at least $d$ by Lemma~\ref{lemma-largen3}.  Therefore, we can assume that $G$ has a vertex $v$ such that
$|N_G^3[v]|\le 2800d^{2+\varepsilon}$.  Let us fix such a vertex $v$.

For every independent set $A\subseteq N_G^2[v]$, we have $N_G[A]\subseteq N_G^3[v]$; consequently,
$2800d^{1-\varepsilon}|A|<|N_G[A]|\le |N_G^3[v]|\le 2800d^{2+\varepsilon}$, and thus $|A|<d^{1+2\varepsilon}$.
Hence, the graph $H=G[N_G^2[v]]$ satisfies $\alpha(H)\le d^{1+2\varepsilon}$.  Since $G$ is triangle-free, $N_G(v)$ is an independent
set, and thus $|H|=|N_G[N_G(v)]|>2800d^{1-\varepsilon}|N_G(v)|>2d^{2-2\varepsilon}$.  By Lemma~\ref{lemma-smallalpha}
applied with $\beta=\gamma=2\varepsilon$, $H$ (and thus also $G$) has a minor of average degree at least $d$.
\end{proof}

Lemma~\ref{lemma-sd} enables us to inductively find large independent sets in graphs whose minors have average degree less than $d$.

\begin{lemma}\label{lemma-isbd}
Let $0<\varepsilon<1/26$ and $d$ be real numbers such that $d^{1-26\varepsilon}\ge 10^7$.
Let $G$ be a triangle-free graph of maximum degree at most $2d$.
If every minor of $G$ has average degree less than $d$, then
$\alpha(G)\ge \tfrac{|G|}{2800d^{1-\varepsilon}}$.
\end{lemma}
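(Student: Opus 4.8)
The plan is to prove Lemma~\ref{lemma-isbd} by induction on $|G|$, using Lemma~\ref{lemma-sd} as the engine that forces a dense minor whenever independent sets locally fail to have large neighborhoods. Concretely, I would argue by contradiction (or equivalently by strong induction): suppose $\alpha(G)<\tfrac{|G|}{2800d^{1-\varepsilon}}$, i.e. $|G|>2800d^{1-\varepsilon}\alpha(G)$. Since every minor of $G$ has average degree less than $d$, Lemma~\ref{lemma-sd} cannot apply to $G$, so its hypothesis must fail: there is a vertex $v$ and a non-empty independent set $A\subseteq N_G^2[v]$ with $|N_G[A]|\le 2800d^{1-\varepsilon}|A|$.

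Next I would peel off this set. Let $G'=G-N_G[A]$. Then $G'$ is again triangle-free, has maximum degree at most $2d$, and every minor of $G'$ is a minor of $G$ and hence has average degree less than $d$, so $G'$ satisfies the hypotheses of the lemma. If $G'$ is empty (or if $|N_G[A]|\ge|G|$, which cannot happen with $A$ proper but handles the degenerate case), we would note directly that $|G|=|N_G[A]|\le 2800d^{1-\varepsilon}|A|\le 2800d^{1-\varepsilon}\alpha(G)$, contradicting our assumption — so we may assume $|G'|<|G|$ and apply the induction hypothesis to get an independent set $B$ in $G'$ with $|B|\ge\tfrac{|G'|}{2800d^{1-\varepsilon}}$. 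Now $A\cup B$ is independent in $G$: $A$ is independent, $B$ is independent, and no vertex of $B$ is adjacent to a vertex of $A$ because all neighbors of $A$ lie in $N_G[A]$, which was deleted to form $G'$. Hence
\[
\alpha(G)\ge |A|+|B|\ge \frac{|N_G[A]|}{2800d^{1-\varepsilon}}+\frac{|G|-|N_G[A]|}{2800d^{1-\varepsilon}}=\frac{|G|}{2800d^{1-\varepsilon}},
\]
where the first term uses $|A|\ge |N_G[A]|/(2800d^{1-\varepsilon})$, which is exactly the failure inequality from Lemma~\ref{lemma-sd}. This contradicts the assumption $\alpha(G)<|G|/(2800d^{1-\varepsilon})$, completing the induction.

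The main subtlety — really the only place care is needed — is making sure the induction is genuinely decreasing and that the base case is covered. Since $A$ is non-empty, $N_G[A]$ is non-empty, so $|G'|<|G|$ strictly; the base case $|G|=0$ (or $1$) is trivial since the claimed bound is then vacuous or immediate. One should also double-check that the hypotheses of Lemma~\ref{lemma-sd} are exactly negated by the existence of such a pair $(v,A)$: Lemma~\ref{lemma-sd} asserts that if for every $v$ and every non-empty independent $A\subseteq N_G^2[v]$ we have $|N_G[A]|>2800d^{1-\varepsilon}|A|$, then $G$ has a dense minor; so if $G$ has no dense minor, the universally-quantified hypothesis must fail, yielding the desired $(v,A)$ with $|N_G[A]|\le 2800d^{1-\varepsilon}|A|$. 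Everything else is bookkeeping, and the arithmetic conditions $0<\varepsilon<1/26$ and $d^{1-26\varepsilon}\ge 10^7$ are simply inherited unchanged by $G'$, so they persist along the induction. I expect no serious obstacle here; the lemma is essentially a clean "greedy peeling" corollary of Lemma~\ref{lemma-sd}.
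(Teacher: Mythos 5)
Your proposal is correct and follows essentially the same route as the paper: negate the hypothesis of Lemma~\ref{lemma-sd} (its conclusion being excluded since no minor has average degree $\ge d$) to obtain a non-empty independent set $A$ with $|N_G[A]|\le 2800d^{1-\varepsilon}|A|$, delete $N_G[A]$, apply induction, and merge the two independent sets. The contradiction framing and your handling of the base case are only cosmetic differences from the paper's direct induction.
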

\begin{proof}
We prove the claim by induction on the number of vertices of $G$.  If $|G|\le 2800d^{1-\varepsilon}$, then the claim holds trivially,
since $\alpha(G)\ge 1$; hence, we can assume $|G|>2800d^{1-\varepsilon}$.
Since every minor of $G$ has average degree less than $d$, Lemma~\ref{lemma-sd} implies there exists a non-empty independent set $A$ in $G$
such that $|N_G[A]|\le 2800d^{1-\varepsilon}|A|$.
By the induction hypothesis, $G-N_G[A]$ has an independent set $A'$ of size at least $\tfrac{|G-N_G[A]|}{2800d^{1-\varepsilon}}\ge \tfrac{|G|}{2800d^{1-\varepsilon}}-|A|$,
and $A'\cup A$ is an independent set in $G$ of size at least $\tfrac{|G|}{2800d^{1-\varepsilon}}$.
\end{proof}

Let us now get rid of the assumption that the maximum degree of the considered graph is at most $2d$.

\begin{lemma}\label{lemma-finalboost}
Let $0<\varepsilon<1/26$ and $d$ be real numbers such that $d^{1-26\varepsilon}\ge 10^7$.
Let $G$ be a triangle-free graph.  If every minor of $G$ has average degree less than $d$, then
$\alpha(G)\ge \tfrac{|G|}{5600d^{1-\varepsilon}}$.
\end{lemma}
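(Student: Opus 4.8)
The plan is to reduce the statement to Lemma~\ref{lemma-isbd} by simply deleting the vertices of large degree. We may assume $|G|\ge 1$, as otherwise the claim is vacuous. Since $G$ is a minor of itself, the hypothesis tells us that $G$ has average degree less than $d$, that is, $2\Vert G\Vert<d|G|$. Let $S$ be the set of vertices of $G$ of degree greater than $2d$. Double counting the edges incident with $S$ gives $2d\,|S|\le\sum_{v\in S}\deg_G(v)\le 2\Vert G\Vert<d|G|$, and hence $|S|<|G|/2$.

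Next I would set $H=G-S$, the subgraph of $G$ induced by $V(G)\setminus S$. Then $H$ is triangle-free, being a subgraph of $G$; it has maximum degree at most $2d$, since every vertex of $H$ already had degree at most $2d$ in $G$ and deleting vertices does not increase degrees; every minor of $H$ is a minor of $G$ and therefore has average degree less than $d$; and $|H|=|G|-|S|>|G|/2\ge 1$, so $H$ is nonempty. The numerical hypotheses $0<\varepsilon<1/26$ and $d^{1-26\varepsilon}\ge 10^7$ are exactly those of Lemma~\ref{lemma-isbd}, so we may apply that lemma to $H$ and obtain an independent set of $H$ of size at least $\tfrac{|H|}{2800d^{1-\varepsilon}}>\tfrac{|G|}{5600d^{1-\varepsilon}}$. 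An independent set of $H$ is an independent set of $G$, so $\alpha(G)\ge\tfrac{|G|}{5600d^{1-\varepsilon}}$, which is what we want.

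I do not expect any genuinely difficult step here: the whole argument is a one-shot reduction rather than an induction. The only point requiring a little care is the double-counting bound $|S|<|G|/2$ — it is precisely this loss of (at most) half the vertices that explains why the constant $2800$ of Lemma~\ref{lemma-isbd} becomes $5600$ in this statement — together with the routine verification that passing to the induced subgraph $G-S$ preserves triangle-freeness, the bound $\Delta\le 2d$ on the maximum degree, and the property that every minor has average degree less than $d$.
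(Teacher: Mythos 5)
Your proof is correct and is essentially identical to the paper's: the paper also deletes the set of high-degree vertices, bounds its size by $|G|/2$ using that $G$ itself (as a minor of itself) has average degree less than $d$, and then applies Lemma~\ref{lemma-isbd} to the remaining graph. The only differences are cosmetic (degree threshold ``greater than $2d$'' versus ``at least $2d$'' and spelling out the double counting explicitly).
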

\begin{proof}
Let $Z$ be the set of vertices of $G$ of degree at least $2d$.
Since $G$ is a minor of itself, it by assumptions has average degree less than $d$, and thus $|Z|\le |G|/2$.
Since $G-Z$ has maximum degree less than $2d$, Lemma~\ref{lemma-isbd} implies that
$\alpha(G)\ge\alpha(G-Z)\ge \tfrac{|G-Z|}{2800d^{1-\varepsilon}}\ge \tfrac{|G|}{5600d^{1-\varepsilon}}$.
\end{proof}

Our main result now follows by a straightforward combination of Theorem~\ref{thm-minor} with Lemma~\ref{lemma-finalboost}.

\begin{proof}[Proof of Theorem~\ref{thm:main}]
Let $d=\tfrac{1}{3}t\sqrt{\log t}$ and let $\varepsilon'=(\varepsilon+1/26)/2$.  Note that $\varepsilon < \varepsilon' < 1/26$.
For sufficiently large $t$, we have $d^{1-26\varepsilon'}\ge 10^7$ and
$t^{1-\varepsilon}\ge 5600d^{1-\varepsilon'}$.  Since $G$ does not contain $K_t$ as a minor, Theorem~\ref{thm-minor}
implies that every minor of $G$ has average degree less than $d$, and thus
$\alpha(G)\ge\tfrac{|G|}{5600d^{1-\varepsilon'}}\ge \tfrac{|G|}{t^{1-\varepsilon}}$ by Lemma~\ref{lemma-finalboost}.
\end{proof}

\section{Concluding Remarks}\label{sec-open}

The \emph{fractional chromatic number} of a graph $G$, $\chi_f(G)$ is defined
to be the smallest positive real number $k$ for which there exists a probability distribution over
the independent sets of $G$ such that for each vertex $v$, given an independent
set $S$ drawn from the distribution, $\mathbb{P}[v\in S]\geq 1/k$. It is easy
to see that $\chi_f(G)\geq \tfrac{n}{\alpha(G)}$. Reed and
Seymour~\cite{reedseymour} strengthened the results of Duchet and Meyniel~\cite{duchetmeyniel}
by showing that graphs $G$ with no $K_{t+1}$ minor not only satisfy $\alpha(G)\ge \tfrac{|G|}{2t}$, but
actually they have fractional chromatic number at most $2t$.  It is natural to ask whether
Theorem~\ref{thm:main} can be extended in a similar way.

\begin{question}
Does there exist some $\varepsilon>0$ and a positive integer $t_0$ such that for
all $t\geq t_0$, all triangle-free graphs with no $K_t$ minor have fractional chromatic number at most
$t^{1-\varepsilon}$? 
\end{question}

Let us remark that our argument contains numerous steps that cannot be carried over to the fractional chromatic number setting (e.g.,
considering only the subgraph induced by vertices of degree at most $2d$).
Furthermore, note that the analogous question for regular chromatic number has a negative answer. By a result of the first author and
Kawarabayashi~\cite{dvorakkawarabayashi} there exist triangle-free graphs of treewidth at most $t$ (and thus
not containing $K_{t+2}$ as a minor) of chromatic number larger than $t/2$.

Another direction of research would be to consider Conjecture~\ref{conj:stable}
for graphs of clique number less than $s$, where $s\geq 4$ is a fixed integer. Based on the lower bounds on the Ramsey
numbers $R(s,k)$ for $s$ fixed and $k$ growing obtained by Bohman~\cite{bohman}
for $s=4$ and by Bohman and Keevash~\cite{bohmankeevash} for $s\geq 5$, one can
see that there are graphs with no $K_s$ subgraph and no independent set of order
$n^{2/s+1}$ up to a polylogarithmic factor. These graphs are also
$K_t$-minor-free for $t=n^{1-\frac{s-2}{s(s-1)-2}}$, which shows that for all such $s$ and
$t$ there are graphs which are $K_t$-minor-free, have clique number less than $s$,
and have no independent set of size $\tfrac{n}{t^{1-\frac{1}{s}}}$.  But we expect that similar result to
Theorem~\ref{thm:main} should hold here as well.

\begin{question}
Fix an integer $s\geq 4$. Is it true that for every $0<\varepsilon<1/s$ there exists positive integer $t_0$ such that for
all $t\geq t_0$, if $G$ is a $K_s$-free graph without $K_{t}$-minor, then $\alpha(G)\geq  \tfrac{|G|}{t^{1-\varepsilon}}$? 
\end{question}

A less ambitious goal would be to answer the above question for all $0<\varepsilon< \varepsilon_0$, for some  $\varepsilon_0>0$.

\section{Acknowledgments}
The second author would like to thank Katherine Edwards and Sergey Norin for
preliminary stimulating discussions on this problem during Bellairs Research
Workshop on Graph Theory, March 2017.

\bibliographystyle{acm}
\bibliography{indeptrfree}

\end{document}